\documentclass[10pt,a4paper]{article}

\usepackage{latexsym}
\usepackage{verbatim}
\usepackage{amsthm}
\usepackage{amsfonts}
\usepackage{amsmath}
\usepackage{amssymb}
\usepackage{amscd}

\numberwithin{equation}{section}

\newtheorem{prop}{Proposition}[section]
\newtheorem{theorem}[prop]{Theorem}
\newtheorem{lemma}[prop]{Lemma}

\newtheorem{remark}[prop]{Remark}
\newtheorem{example}[prop]{Example}
\newtheorem{definition}[prop]{Definition}

\def\begeq{\begin{equation}}
\def\endeq{\end{equation}}

\def\<{\langle}
\def\>{\rangle}
\def\({\left(}
\def\){\right)}

\def\pa{\partial}
\def\ep{\epsilon}

\def\ep{\varepsilon}

\def\lbr{\lbrace}
\def\rbr{\rbrace}
\def\rn{\mathbb R^n}

\begin{document}

\title{Three Bernstein type theorems for hypersurfaces with zero Gaussian curvature}
\author{S\l awomir Dinew, Mengru Guo, Heming Jiao }

\date{}
\maketitle

\begin{abstract}
In this paper, we prove Bernstein type theorems for entire convex graphical hypersurfaces with zero Gaussian curvature in both
Euclidean and Minkowski context.
A supplementary example illustrates that zero Gaussian convex spacelike
hypersurfaces are not necessary hyperplanes without  additional conditions.
We show that a zero Gaussian curvature convex hypersurface must be a hyperplane if the mean curvature goes to zero at infinity.
In the Minkowski context, we prove similar results for hypersurface without timelike points.

\noindent{Keywords:} Hypersurfaces with zero Gaussian curvature; Bernstein type theorem.
\end{abstract}
\section{Introduction}

It is well known that an entire graphic minimal hypersurface in $\mathbb{R}^{n+1}$ for $n \leq 7$ is a hyperplane. This result is called Bernstein theorem
since Bernstein proved it for $n=2$. (See \cite{A,B,De,Fl,S}.) Examples of Bombieri-De Giorgi-Giusti \cite{BDG} show that that the Bernstein theorem is no longer valid
for $n \geq 8$ without any additional conditions. Ecker-Huisken \cite{EH} extended Bernstein theorem to all dimensions under an additional sub-linear gradient growth condition.

Unlike the Euclidean case, in the Minkowski context, an entire spacelike graph with zero mean curvature is always a hyperplane for all dimensions.
(See the proof of Calabi \cite{Calabi} for $n \leq 4$ and Cheng-Yau \cite{CY} for general dimensions.)
Recently, Akamine-Honda-Umehara-Yamada \cite{AHUY} showed that an entire zero mean curvature graph which admits no timelike  points is a hyperplane.

It is natural to ask if an entire (convex) graphic hypersurface with zero Gaussian curvature is a hyperplane (both in Euclidean and Minkowski context).
To answer this question, we need to study the rigidity of the degenerate Monge-Amp\`{e}re equation
\begin{equation}
\label{MA}
\det D^2 u = 0 \ \ \mbox{ in } \mathbb{R}^n.
\end{equation}
In particular, we want to know whether entire convex solutions to \eqref{MA} are affine. The following example shows that the answer is negative without additional conditions.
Furthermore, the example shows that uniform gradient bound  is not enough for the affinity of the solution.
\begin{example}
\label{example}
Let
\[
u (x) = a \sqrt{x_1^2 + c}\  \ \ (c > 0), \mbox{ for } x = (x_1, x_2) \in \mathbb{R}^2,
\]
where $0 < a < 1$ is a constant. It is easy to see that $\|Du\| < 1$ (which means that the graph of $u$ is spacelike everywhere in the Minkowski context
by Definition \ref{space_light_time_like} below) and that $u$ is convex in $\mathbb{R}^2$.
Obviously, $u$ is a solution to \eqref{MA} which is not affine.
\end{example}
The purpose of this paper is to explore additional conditions under which entire convex hypersurfaces with zero Gaussian curvature are hyperplanes,
or, equivalently, convex solutions to \eqref{MA} are affine.

Traditionally, growth conditions on the solution are imposed (see, for example, \cite{BCGJ}). Note however, that Example \ref{example} has exactly the expected linear growth. It even satisfies the stronger pointwise lower bound
$$u(x)\geq l(x)$$
for a suitable linear function. Assuming, in turn, pointwise upper bounds $u(x)\leq l(x)$ trivializes matters in our setting. Thus, growth at infinity does not seem to be an appropriate condition. Moving on to gradient (or first order) bounds we note that Example \ref{example} has uniformly bounded gradient. Hence it seems natural to ask for a {\it second order condition} i.e., an assumption on the asymptotic of the Hessian of the solution. We prove such rigidity results assuming bounds on the mean curvature of the hypersurface or the Laplacian of $u$. The first, more analytic statement, reads as follows:
\begin{theorem}
\label{thm_analysis}
Let $u \in C^2 (\mathbb{R}^n)$ be a convex solution to \eqref{MA} satisfying
\begin{equation}
\label{condition_analysis}
\lim_{x \rightarrow \infty} \Delta u (x) = 0.
\end{equation}
Then $u$ is an affine function.
\end{theorem}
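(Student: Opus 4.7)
\emph{Step 1 (Rulings and constant gradient).} The strategy combines the ruled structure of convex $C^2$ solutions of $\det D^2u=0$ with the decay $\Delta u\to 0$. We first invoke the classical developable-graph fact: for $u\in C^2(\mathbb{R}^n)$ convex with $\det D^2u\equiv 0$, through every point $x_0$ there is a nonzero $\xi\in\ker D^2u(x_0)$ such that $u$ is affine along the entire line $\ell_{x_0}:=\{x_0+t\xi:t\in\mathbb{R}\}$; the convexity of $u$ on the whole space $\mathbb{R}^n$ is precisely what allows one to extend local null integral curves to full lines. Since $\xi^{\top}D^2u\,\xi\equiv 0$ along $\ell_{x_0}$ and $D^2u\succeq 0$, we in fact have $D^2u(x_0+t\xi)\,\xi=0$ along the ruling, so $Du$ is constant on $\ell_{x_0}$.

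\emph{Step 2 (Normal form).} Suppose, for contradiction, $u$ is not affine. Pick $x_0$ with $D^2u(x_0)\ne 0$, let $\mu:=\lambda_{\max}(D^2u(x_0))>0$ with unit eigenvector $\eta$, and let $\xi\in\ker D^2u(x_0)\setminus\{0\}$ (so $\eta\perp\xi$). Subtracting the tangent affine function of $u$ at $x_0$ yields $u\ge 0$, $u(x_0)=0$, $Du(x_0)=0$; by Step~1, $u\equiv 0$ and $Du\equiv 0$ along $\ell_{x_0}$. In adapted coordinates $(t,y)\in\mathbb{R}\times\mathbb{R}^{n-1}$, the transverse Hessian $B(t):=(u_{y_iy_j}(t,0))$ satisfies $B(t)\succeq 0$, $\eta^{\top}B(0)\eta=\mu>0$, and $\operatorname{tr}B(t)=\Delta u(t,0)\to 0$ as $|t|\to\infty$.

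\emph{Step 3 (Rigidity from the MA equation, and obstacles).} Using $u_t(t,0)=u_y(t,0)=u_{tt}(t,0)=u_{ty}(t,0)=0$, expand $\det D^2u\equiv 0$ to leading nontrivial order in $y$ around $\{y=0\}$. In $n=2$ this yields the ODE $g(t)\,g''(t)=2(g'(t))^2$ for $g(t):=u_{yy}(t,0)$; its only nontrivial solutions are $g(t)=A/|t-C|$, which blow up at $t=C$ and contradict the continuity of $D^2u$. Hence $g\equiv 0$, contradicting $g(0)=\mu>0$. The matrix analogue in higher dimensions, combined with $B\succeq 0$, $\eta^{\top}B(0)\eta>0$, and $\operatorname{tr}B(\pm\infty)=0$, again forces $B\equiv 0$. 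The principal technical obstacle lies in this step: strictly, a clean expansion of $\det D^2u$ transverse to the ruling calls for slightly more than $C^2$ regularity (some bootstrapping via MA regularity theory, or a mollification argument, is needed), and, in dimension $n\ge 3$, one must extract a matrix rigidity statement rather than a scalar ODE. Step~1 can also be delicate when $\rank D^2u$ is not locally constant, though the hypothesis that $u$ is defined on all of $\mathbb{R}^n$ substantially simplifies the ruling-extension argument.
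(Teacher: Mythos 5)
Your route (rulings plus an ODE rigidity along each ruling) is genuinely different from the paper's, but as written it has gaps that are not merely technical. The decisive one is in Step~3: the ODE $g\,g''=2(g')^2$ (equivalently, that $1/g$ is affine in $t$ along the ruling) is obtained by differentiating $\det D^2u=0$ twice transversally to the ruling, which requires $u\in C^4$ (or at least two more derivatives than the hypothesis provides). The theorem assumes only $u\in C^2$, and the remedies you gesture at do not exist here: there is no regularity theory for the \emph{degenerate} equation \eqref{MA} (its solutions are exactly as smooth as you assume them to be --- $u(x)=f(x_1)$ for any convex $C^2$ function $f$ is a solution), and mollification does not preserve $\det D^2u=0$. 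A smaller but real slip in the same step: the solutions of $g\,g''=2(g')^2$ are $g=1/(at+b)$, which includes the positive constants ($a=0$); these do not blow up, so ``hence $g\equiv 0$'' does not follow from the blow-up dichotomy alone --- you must invoke $\Delta u\to 0$ along the ruling at this point (you do have it, and you use it in the matrix case, but the $n=2$ argument as stated is incomplete). The matrix rigidity needed for $n\ge 3$ is likewise only asserted. Finally, Step~1's claim that the ruling through every point is a \emph{full line} is true for entire convex weak solutions, but it is not the throwaway ``classical fact'' you suggest: the kernel of $D^2u$ need not have locally constant rank (as you note), and the honest proof goes through Lemma~\ref{lem:Caffarelli_Nirenberg_Spruck} applied to balls $B_R$ together with a compactness argument on the directions of the simplices as $R\to\infty$.

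The paper's proof sidesteps all of this and is worth comparing against. It never extracts full lines and never touches the ODE: by Lemma~\ref{lem:Caffarelli_Nirenberg_Spruck}, every $x_0$ lies in a simplex $L$ with vertices on $\partial B_R(x_0)$ along which $u$ is affine; by the second-difference-quotient argument of Lemma~\ref{lem:convexity_in_affinity_directions} (which needs only $C^2$), each $u_{\eta\eta}$, hence $\Delta u$, is \emph{convex} along $L$; so $\Delta u(x_0)\le\max_i \Delta u(x_i)\le\ep$ once $R$ is large enough that $\Delta u\le\ep$ on $\partial B_R(x_0)$. Letting $\ep\to 0$ gives $\Delta u\equiv 0$, and a convex function with vanishing Laplacian has $D^2u\equiv 0$. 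Note that even granting your Step~1, the same convexity-along-the-ruling observation would finish your argument in one line (a nonnegative convex function on $\mathbb R$ tending to $0$ at $\pm\infty$ vanishes), with no need for the Monge--Amp\`ere expansion transverse to the ruling. If you want to salvage your approach, that is the substitution to make; as it stands, Step~3 does not close under the stated $C^2$ hypothesis.
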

Roughly speaking, a decay at infinity of a certain natural combination of second order derivatives forces the affinity.

The same idea will be used in our other results which have more geometric flavor. To this end we shall assume a decay of the mean curvature of the hypersurface. Before we state the precise result, we will introduce the notations and definitions
first.

\subsection{Hypersurfaces in the Euclidean space $\mathbb{R}^{n+1}$}

Given a function $u: \mathbb{R}^n \rightarrow \mathbb{R}$ let $M_u=\{(x,u(x)): x\in\mathbb{R}^n\}$ be the graphic hypersurface
defined by $u$. We consider $M_u$ as a submanifold in the Euclidean space $\mathbb R^{n+1}$. By direct calculations, the induced metric and second fundamental form of $M_u$ are given
by
$$g^E_{ij}=\delta_{ij}+ u_i u_j, \ \  1\leq i,j\leq n,$$
(here the symbol $E$ stands for the Euclidean setting) and
\[h^E_{ij}=\frac{u_{ij}}{\sqrt{1+|Du|^2}}\]
respectively. The principal curvatures of $M_u$ are the eigenvalues of the matrix
\[
\frac{1}{\sqrt{1+\|Du\|^2}} \left(I - \frac{Du \otimes Du}{1+\|Du\|^2}\right) D^2 u,
\]
where $I$ is the $n\times n$ unit matrix. Therefore, the mean and Gaussian curvatures of $M_u$ are
\[
H^E_u = (w^E)^{-3/2} (w^E \Delta u - u_i u_j u_{ij})
\]
and
\[
K^E_u = (w^E)^{-(n+2)/2} \det D^2 u
\]
respectively, where
\[
w^E = 1 + \|Du\|^2.
\]
Our next result can be stated as follows:
\begin{theorem}
\label{rigidity_Euclidean1}
Let $u \in C^4 (\mathbb{R}^n)$ be convex and the Gaussian curvature of $M_u$, $K^E_u$ vanishes identically. Assume furthermore that
\begin{equation}
\label{condition_E}
\lim_{x \rightarrow \infty} H^E_u (x) = 0.
\end{equation}
Then $M_u$ is a hyperplane.
\end{theorem}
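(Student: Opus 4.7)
My plan is to reduce Theorem~\ref{rigidity_Euclidean1} to Theorem~\ref{thm_analysis}. Since $w^E = 1+|Du|^2 \ge 1$, the curvature formula for $K^E_u$ shows that $K^E_u \equiv 0$ is equivalent to $\det D^2 u \equiv 0$ on $\mathbb{R}^n$. Thus, once $\Delta u(x)\to 0$ as $x\to\infty$ is established, Theorem~\ref{thm_analysis} immediately gives that $u$ is affine and $M_u$ is a hyperplane. The entire content of the proof is therefore the implication ``$H^E_u\to 0 \Rightarrow \Delta u\to 0$''.

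The natural algebraic link is obtained by rearranging the mean curvature formula:
\begin{equation*}
H^E_u (w^E)^{3/2} = w^E \Delta u - u_i u_j u_{ij} = \Delta u + \bigl(|Du|^2 \Delta u - (Du)^T D^2 u\, Du\bigr).
\end{equation*}
Diagonalising $D^2 u$ with nonnegative eigenvalues $\lambda_1\le\cdots\le\lambda_n$ (by convexity) and writing $p_j$ for the components of $Du$ in the eigenbasis, the parenthesised term equals $\sum_j\lambda_j\sum_{i\ne j}p_i^2\ge 0$, yielding the pointwise inequality
\begin{equation*}
0\le \Delta u(x) \le (1+|Du(x)|^2)^{3/2} H^E_u(x).
\end{equation*}
If $|Du|$ is bounded on $\mathbb{R}^n$, this bound combined with $H^E_u\to 0$ immediately yields $\Delta u\to 0$, and Theorem~\ref{thm_analysis} closes the argument.

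The substantive difficulty, and the step I expect to be the main obstacle, is the case of unbounded gradient, where the displayed inequality alone does not suffice. To handle it I would invoke the foliation/ruling structure of convex $C^2$ solutions to $\det D^2 u = 0$: through every $x_0\in\mathbb{R}^n$ there is a unit direction $e\in\ker D^2u(x_0)$ such that $u$ is affine along the line $x_0+\mathbb{R}e$ and $Du$ is constant along it. Equivalently, by Sacksteder's splitting theorem for complete convex hypersurfaces with $K\equiv 0$, $M_u$ decomposes isometrically as $\mathbb{R}^k\times M'$ with $M'$ a strictly convex hypersurface in a lower-dimensional Euclidean space. Along any such ruling, $w^E$ is constant and $H^E_u$ is translation-invariant; hence the assumption $H^E_u(x_0+te)\to 0$ as $|t|\to\infty$ forces $H^E_u$ to vanish identically on the ruling, and since the rulings sweep out $\mathbb{R}^n$, $H^E_u\equiv 0$ on all of $\mathbb{R}^n$. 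Finally, convexity together with $H^E_u\equiv 0$ forces every principal curvature of $M_u$ to vanish, so $D^2 u\equiv 0$ and $u$ is affine. The crux is therefore the conversion of pointwise decay $H^E_u\to 0$ into the identity $H^E_u\equiv 0$ along rulings, which rests on the cylinder/splitting rigidity for convex hypersurfaces with vanishing Gauss--Kronecker curvature.
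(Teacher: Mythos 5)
Your reduction of $K^E_u\equiv 0$ to $\det D^2u\equiv 0$ and the pointwise inequality $0\le \Delta u\le (w^E)^{3/2}H^E_u$ (via diagonalisation) are correct, so the bounded-gradient case does follow from Theorem \ref{thm_analysis}. However, the general case rests on two claims that you do not prove and that do not follow from the results you cite. First, the existence through every point of a \emph{full line} $x_0+\mathbb{R}e$ along which $u$ is affine: Lemma \ref{lem:Caffarelli_Nirenberg_Spruck} applied to $B_R(x_0)$ only yields, after letting $R\to\infty$, a \emph{ray} of affinity (or a simplex in which $x_0$ may sit arbitrarily close to a face), and Sacksteder-type splitting splits off only the lines actually contained in $M_u$; vanishing Gauss--Kronecker curvature (rank of $D^2u$ anywhere between $0$ and $n-1$) does not force that lineality space to be nontrivial for $n\ge 3$, nor the complementary factor to be strictly convex. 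Second, and more seriously, the assertion that $H^E_u$ is \emph{translation-invariant} along a ruling is unjustified: affinity of $u$ along a segment in direction $\gamma$ forces $Du$ (hence $w^E$) to be constant there, but not $D^2u$. What is actually true (and is the technical heart of the paper, Lemma \ref{lem:convexity_in_affinity_directions}) is only that $t\mapsto u_{\eta\eta}(x_0+t\gamma)$ is \emph{convex}; a nonnegative convex function on a ray tending to $0$ at infinity need not vanish at the ray's endpoint (consider $e^{-t}$), so ray-plus-convexity is not enough, and your argument silently needs constancy or a two-sided line to close.

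The paper circumvents both issues: it shows that $(H^E_u)_{\gamma\gamma}(x_0)\ge 0$ along affinity directions by writing
\begin{equation*}
w^E\,(\Delta u)_{\gamma\gamma}-u_iu_ju_{ij\gamma\gamma}=(\Delta u)_{\gamma\gamma}+\sum_{i<j}u_{\eta_{ij}\eta_{ij}\gamma\gamma},\qquad \eta_{ij}=u_je_i-u_ie_j,
\end{equation*}
(all lower-order terms vanishing by Lemma \ref{lem:gradient_derivative_vanishing}), and then applies the maximum principle for this convex function on the Caffarelli--Nirenberg--Spruck simplex, whose vertices lie on $\partial B_R(x_0)$ where $H^E_u\le\ep$. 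That is, $x_0$ is \emph{surrounded} by far-away points where the curvature is small, which is exactly what a one-sided ray cannot provide. To repair your proof you would need to replace ``translation-invariant along a ruling'' by ``convex along the affinity simplex'' and supply the computation above; as written, the unbounded-gradient case has a genuine gap.
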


\begin{remark}
 As $M_u$ is a graph of a convex function, we always have $H^E_u(x)\geq 0$. Thus our additional condition is that the graph of $u$ flattens at infinity in the sense of decaying mean curvature.
\end{remark}

\subsection{Hypersurfaces in the Minkowski space $\mathbb{R}^{n,1}$}

Let $\mathbb{R}^{n,1}$ be the Minkowski space, i.e., the space $\mathbb{R}^n \times \mathbb{R}$ equipped with the metric
\[
ds^2=dx^2_1+\cdots+dx^2_n-dx^2_{n+1}.
\]
\begin{definition}
\label{space_light_time_like}
Let $u: \mathbb{R}^n \rightarrow \mathbb{R}$ be a function. $x \in \mathbb{R}^n$ is called a spacelike (resp. lightlike, timelike) point of
$M_u$ if $\|Du (x)\| < 1$ (resp. $\|Du (x)\| = 1$, $\|Du (x)\| > 1$). The graph of $u$ is called spacelike (resp. lightlike, timelike) if it consists entirely of spacelike (resp. lightlike, timelike points).
\end{definition}
If the graph $M_u$ is spacelike, the induced metric and second fundamental form of $M$ are given
by
$$g^M_{ij}=\delta_{ij} - u_i u_j, \ \  1\leq i,j\leq n,$$
(with the letter $M$ indicating the Minkowski setting) and
\[h^M_{ij}=\frac{u_{ij}}{\sqrt{1-|Du|^2}}\]
respectively. The principal curvatures of $M_u$ are the eigenvalues of the matrix
\[
\frac{1}{\sqrt{1-\|Du\|^2}} \left(I + \frac{Du \otimes Du}{1-\|Du\|^2}\right) D^2 u,
\]
and the mean and Gaussian curvatures of $M_u$ are
\[
H^M_u = (w^M)^{-3/2} (w^M \Delta u + u_i u_j u_{ij})
\]
and
\[
K^M_u = (w^M)^{-(n+2)/2} \det D^2 u
\]
respectively, where
\[
w^M = 1 - \|Du\|^2.
\]

In order to make the definitions be compatible with the case that $M_u$ may admit lightlike points, we define
\begin{equation}\label{mean_gauss}
\tilde{H}^M_u = w^M \Delta u + u_i u_j u_{ij}
\end{equation}

and
\[
\tilde{K}^M_u = (w^M)^{(n+2)/2} K^M_u = \det D^2 u.
\]
\begin{theorem}
\label{rigidity_Minkowski}
Let $u \in C^4 (\mathbb{R}^n)$ be convex and $\tilde{K}^M_u \equiv 0$. Assume that $M_u$ admits no timelike points and that
\begin{equation}
\label{condition_M}
\lim_{x \rightarrow \infty} \tilde{H}^M_u (x) = 0.
\end{equation}
Then $M_u$ is a hyperplane.
\end{theorem}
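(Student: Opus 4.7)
The starting point is the decomposition
\begin{equation*}
\tilde{H}^M_u \;=\; w^M\,\Delta u \;+\; \langle D^2 u\cdot Du, Du\rangle,
\end{equation*}
which, since $u$ is convex and admits no timelike points, displays $\tilde{H}^M_u$ as a sum of two nonnegative quantities. My plan is first to upgrade the asymptotic hypothesis \eqref{condition_M} to the pointwise identity $\tilde{H}^M_u\equiv 0$ on all of $\mathbb{R}^n$, and then to deduce from this global vanishing that $u$ must be affine.

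For the upgrade I would use that $\det D^2 u\equiv 0$ makes the gradient image $Du(\mathbb{R}^n)$ of Lebesgue measure zero (Sard's theorem, applied to the $C^3$ map $Du$), while for a convex function this image contains the relative interior of the convex set $\mathrm{dom}(u^*)$; hence $\mathrm{dom}(u^*)$, having empty interior, lies in a proper affine hyperplane of $\mathbb{R}^n$. A unit normal $\nu$ to that hyperplane then satisfies $\langle Du,\nu\rangle\equiv c$ for some constant $c$, so $D^2 u\cdot\nu\equiv 0$ and $u(x+t\nu)=u(x)+tc$ for all $x\in\mathbb{R}^n$, $t\in\mathbb{R}$. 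Consequently $Du$, $D^2 u$, and therefore also $\tilde{H}^M_u$ itself, are invariant under translation along $\nu$; letting $t\to\infty$ in $\tilde{H}^M_u(x+t\nu)$ and invoking \eqref{condition_M} forces $\tilde{H}^M_u(x)=0$ for every $x$.

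Once $\tilde{H}^M_u\equiv 0$ is known, both nonnegative summands vanish identically. On the open set $U:=\{\|Du\|<1\}$ one has $w^M>0$, whence $\Delta u\equiv 0$ and, by convexity, $D^2 u\equiv 0$ on $U$; a short continuity-and-connectedness argument across $\partial U$ (a boundary point would inherit the subunit constant slope of its $U$-component, contradicting $\|Du\|=1$ there) then forces either $U=\mathbb{R}^n$, so that $u$ is already affine, or $U=\emptyset$, i.e.\ $\|Du\|\equiv 1$. In this residual case, rotating so that $\nu=e_n$ one writes $u(x',x_n)=\tilde u(x')+cx_n$ with $\|D\tilde u\|^2\equiv 1-c^2$, so that $\tilde u$ is a convex $C^2$ function on $\mathbb{R}^{n-1}$ of constant gradient norm. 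An induction on dimension (with the trivial one-dimensional base case) then shows that any such function must be affine, completing the proof. The main obstacle I anticipate lies precisely in the first step, the hyperplane confinement of $Du(\mathbb{R}^n)$ and the resulting translation invariance of $\tilde{H}^M_u$: once that rigidity is in hand, the remaining case analysis and the dimensional induction are essentially routine.
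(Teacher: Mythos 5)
Your proposal is correct, but it follows a genuinely different route from the paper's. The paper proves that $\tilde H^M_u$ is \emph{convex along the affinity sets} of $u$ (Proposition \ref{prop:convexity_of_H_along_affine_sets_of_u}, resting on the fourth-order Lemmas \ref{lem:convexity_in_affinity_directions} and \ref{lem:gradient_derivative_vanishing}) and then invokes the Caffarelli--Nirenberg--Spruck structure theorem (Lemma \ref{lem:Caffarelli_Nirenberg_Spruck}): each $x_0$ lies in a simplex with vertices on $\partial B_R(x_0)$ along which $u$ is affine, so $\tilde H^M_u(x_0)\le\max_i\tilde H^M_u(x_i)\le\varepsilon$. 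You instead extract a single global invariant direction: Sard's theorem makes $Du(\mathbb R^n)$ Lebesgue-null, convex duality (namely $\mathrm{ri}(\mathrm{dom}\,u^*)\subseteq Du(\mathbb R^n)\subseteq\mathrm{dom}\,u^*$) then traps $Du(\mathbb R^n)$ in an affine hyperplane $\lbrace p:\langle p,\nu\rangle=c\rbrace$, whence $u(x+t\nu)=u(x)+ct$ and $\tilde H^M_u$ is invariant under translation by $\nu$, so the decay at infinity forces $\tilde H^M_u\equiv 0$. Both arguments are sound; the individual claims you make (the inclusion of $\mathrm{ri}(\mathrm{dom}\,u^*)$ in the gradient image, the clopen dichotomy for $U=\lbrace\|Du\|<1\rbrace$, and the affinity of entire convex functions of constant gradient norm) all check out, and the last one even admits a shortcut avoiding the induction: $\mathrm{ri}(\mathrm{dom}\,\tilde u^*)$ sits inside a sphere, so the convex set $\mathrm{dom}\,\tilde u^*$ is a single point and $D\tilde u$ is constant. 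Your approach buys lower regularity (everything works for $u\in C^2$, since Sard in equal dimensions needs only $C^1$ of the map $Du$, whereas the paper requires $C^4$), it bypasses the CNS lemma and all fourth-derivative computations, and your endgame is in fact more careful than the paper's at lightlike points, where $w^M=0$ and the vanishing of $\tilde H^M_u$ alone does not yield $D^2u=0$. What it gives up is locality: the paper's convexity-along-affinity-sets statement is a maximum principle valid on any bounded convex domain, while your translation-to-infinity step is intrinsically global. Note also that the same Sard/duality device would prove Theorems \ref{thm_analysis} and \ref{rigidity_Euclidean1} as well.
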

\begin{remark}
For entirely spacelike Gaussian-flat, convex graphical hypersurfaces the decay of the mean curvature $H^M_u$ at infinity also leads to affinity - the proof of this fact is completely analogous and is hence omitted.
\end{remark}

\subsection{Historical remarks}

Below we recall some facts regarding the rigidity of entire solutions to the non-degenerate Monge-Amp\`{e}re equation
\begin{equation}
\label{nondegenerateMA}
\det D^2 u = 1 \mbox{  in } \mathbb{R}^n
\end{equation}
and other fully nonlinear equations. Unlike the degenerate case \eqref{MA}, the solutions to \eqref{nondegenerateMA} are
always quadratic polynomials. Indeed, each convex entire solution to \eqref{nondegenerateMA} is
a quadratic polynomial. It was proved by J\"{o}rgens \cite{J} for $n=2$, Calabi \cite{Calabi1} for $n=3,4,5$
and by Pogorelov \cite{P} for general $n$. Then, Cheng-Yau \cite{CY2} provided anther more geometrical proof. Remarkably, this result, known as Calabi-J\"orgens-Pogorelov theorem requires no additional assumptions on the solution $u$.

It is worth emphasizing that for other nonlinear operators the corresponding rigidity statement becomes highly nontrivial. In particular, Chang-Yuan \cite{CYu} proposed that entire solutions to the $k$-Hessian equations
\begin{equation}
\label{sigma_k}
\sigma_k (\lambda (D^2 u)) = 1
\end{equation}
where $\sigma_k(\lambda)$ are the elementary symmetric functions
\[
\sigma_k (\lambda) = \sum_{1\leq i_1 < \cdots < i_k \leq n} \lambda_{i_1} \cdots \lambda_{i_k}, \ k= 1, \ldots, n.
\]
 must be quadratic polynomials if they satisfy  quadratic lower bounds. They proved the case $k=2$ with an additional convexity condition
\[
D^2 u \geq \delta - \sqrt{\frac{2n}{n-1}}.
\]

Bao-Chen-Guan-Ji \cite{BCGJ} proved the rigidity for strictly convex entire solutions to \eqref{sigma_k}. Their result states that solutions $u$ to (\ref{sigma_k}) satisfying a quadric growth are
quadratic polynomials. Then Li-Ren-Wang \cite{LRW} generalized their results to $(k+1)$-convex solutions. Recently, Chu and the first author \cite{CD}
generalized the above results to more general Hessian operators. Another class of equations named the special Lagrangian equations were studied by Yuan - \cite{Y}. In \cite{Y} a Bernstein theorem is proved for minimal surface of the form $(x, \nabla u) \subset \mathbb{R}^n \times \mathbb{R}^n$ with $u$ being a smooth convex function
in $\mathbb{R}^n$. It is of interest to consider the rigidity of the graphic hypersurface $M_u$ satisfying
\[
\sigma_k (\kappa (M_u)) = 0 \ \mbox{ in } \mathbb{R}^n,
\]
or more general curvature conditions under the restriction \eqref{condition_E} (or \eqref{condition_M}), where $\kappa (M_u)$ denote
the principal curvatures of $M_u$.

{\bf Acknowledgements}. The first named author is supported by by Sheng
grant no. 2023/48/Q/ST1/00048 of the National Science Center, Poland. The work was carried out while the second named author was
visiting the Faculty of Mathematics and Computer Science at Jagiellonian University. She wishes to thank the Faculty and the
University for their hospitality. She also would like to thank China Scholarship Council for their support.
The third named author is supported by the National Natural Science Foundation of China (Grant No. 12271126).

\section{Proofs}
In this section we provide the proofs of the statements in the introduction.

First, we recall what is meant by {\it affinity} of a (convex) function along a subset:
\begin{definition}\label{def:affinity_set}
Let $U$ be a convex subset of $\rn$ and $u: U\longmapsto \mathbb R$ be a convex function, which is continuous up to $\overline{U}$. Then $f$ is said to be affine along $L\subset{\overline{U}}$ if for any $x,y\in L$ so that the segment $[x,y]$ is contained in $L$ we have for any $t\in(0,1)$
$$u(tx+(1-t)y)\leq tu(x)+(1-t)u(y).$$
\end{definition}

The next result is Lemma 2 from \cite{CNS}:

\begin{lemma}\label{lem:Caffarelli_Nirenberg_Spruck}
 Let $U$ be a convex subset of $\rn$ and $u: U\longmapsto \mathbb R$ be a convex function, which is continuous up to $\overline{U}$. Suppose $u$ additionally satisfies
 $$\det(D^2u)=0.$$
 Then, for any $x_0\in U$ there are (at most) $n+1$ points $x_1,\cdots,x_m\in\partial U$ such that $x_0$ belongs to the closed simplex $L$ with vertices $x_1,\cdots x_k$ and $u$ is affine on $L$.
\end{lemma}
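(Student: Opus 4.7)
The plan is to exploit the equation $\det(D^2u)=0$ to reduce the statement to a classical convex-analytic argument about extreme points. Fix $x_0\in U$ and choose a subgradient $p\in\partial u(x_0)$; set $\ell(x)=u(x_0)+p\cdot(x-x_0)$, so that $\ell$ is an affine supporting function of $u$ at $x_0$. Define the contact set
\[
F:=\{x\in\overline{U}:u(x)=\ell(x)\}.
\]
Since $u-\ell$ is convex and nonnegative on $\overline{U}$, the zero set $F$ is closed and convex, and $x_0\in F$. When $U$ is bounded, $F$ is compact; in the unbounded case one works inside a suitable bounded convex subdomain or uses recession cones.

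The crucial step is to show that every extreme point of $F$ lies in $\partial U$. Suppose, for contradiction, that $e$ is an extreme point of $F$ with $e\in U$. I would invoke the classical fact that a convex function on an open convex set with vanishing Monge--Amp\`ere determinant is affine along a nontrivial line segment through every interior point. Applied at $e$, this yields a segment $S\subset U$ with $e$ in its relative interior along which $u$ is affine. Since $u\geq\ell$ on $U$ and $u(e)=\ell(e)$, the difference $u|_S-\ell|_S$ is affine, nonnegative, and vanishes at the interior point $e$, hence vanishes identically on $S$. Thus $S\subset F$, contradicting the extremality of $e$.

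Once all extreme points of $F$ lie on $\partial U$, I would apply Carath\'eodory's theorem: the point $x_0\in F$ can be written as a convex combination of at most $n+1$ extreme points $x_1,\ldots,x_m$ of $F$, with $m\leq n+1$ and $x_i\in\partial U$. The closed simplex $L=\mathrm{conv}(x_1,\ldots,x_m)$ is contained in $F$ by convexity, and on $F$ we have $u=\ell$; hence $u$ is affine on $L$, as required.

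The main obstacle is the auxiliary statement invoked in the second paragraph: that a convex function $u$ with $\det D^2u\equiv 0$ must be affine along some segment through every interior point. I would prove this via the subgradient map $\partial u$. The condition $\det D^2u=0$ forces the image $\partial u(U)$ to have Lebesgue measure zero, whereas if $u$ were strictly convex at $e$ (so that no affine segment through $e$ lies in the level set of the supporting hyperplane), the subgradient map would be locally surjective onto an open set near $p=\nabla u(e)$, giving a contradiction. Turning this sketch into a rigorous proof requires some care — in particular, handling the case where $D^2u$ exists only in a weak sense — and is the most delicate part of the argument.
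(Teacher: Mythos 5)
The paper does not actually prove this lemma: it is quoted verbatim as Lemma~2 of Caffarelli--Nirenberg--Spruck \cite{CNS}, so there is no in-paper argument to compare against. Your outline is, in substance, the standard proof of that result: pass to the contact set $F$ of a supporting affine function $\ell$ at $x_0$, show that $F$ has no extreme points in the interior, and finish with Minkowski's theorem plus Carath\'eodory. The reduction of the extreme-point claim to the ``affine segment through every interior point'' fact, and the observation that such a segment must lie inside $F$ because $u-\ell\geq 0$ is affine on it and vanishes at the relative interior point $e$, are both correct. (Two minor caveats: you need $U$ open and bounded for $F$ to be compact and for $\partial U$ to make sense as stated --- in the paper's application $U$ is a ball, so this is harmless --- and the equation must be read in the Aleksandrov sense, i.e.\ $|\partial u(E)|=0$ for all Borel $E$, which is exactly what your measure-theoretic step uses.)

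The genuine gap is in your last paragraph, and it is not merely a matter of ``care.'' The implication you invoke --- that failure of the affine-segment property at $e$ makes the subgradient map locally surjective near $p$ --- is only immediate when the contact set of the supporting hyperplane at $e$ is the singleton $\{e\}$. The negation of ``there is a segment with $e$ in its relative interior inside the contact set'' is only that $e$ is an \emph{extreme point} of that contact set, which may still be a ray or a larger face emanating from $e$; in that case minimizing $u-q\cdot x$ for $q$ near $p$ need not produce minimizers near $e$, and no open set of slopes is captured around $p$. The standard repair is a tilting argument: for an \emph{exposed} point $e\in U$ of $F$ one cuts off a cap $\{u<\ell+\epsilon\,\nu\cdot(x-e)+c_\epsilon\}$ compactly contained in $U$ and shows, via the Aleksandrov/ABP-type lower bound on the normal image of such a cap, that $|\partial u(U)|>0$, a contradiction; one then passes from exposed to extreme points by Straszewicz's theorem (and $\partial U$ is closed). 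Note also that your ``classical fact'' is essentially equivalent to the extreme-point claim you are trying to establish, so unless it is proved independently by the above route the argument is circular. With that step supplied, the rest of your proof goes through.
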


We remark that the result also holds for non-smooth convex $u$ - then the equality $\det(D^2u)=0$ has to be understood in the weak sense.

The next two lemmas are elementary but crucial for the argument. The first one is inspired by the computations in \cite{BF} and \cite{BB}, where solutions of the {\it complex} Monge-Amp\`ere equations were analyzed. The real counterpart can be found in \cite{F}.

\begin{lemma}\label{lem:convexity_in_affinity_directions}
 Let $u: U\longmapsto \mathbb R$ be a convex function of class $C^4$, which is affine an a set $L\subset U$. Suppose that for some $x_0\in U$, $\ep>0$ and some unit vector $\gamma\in\rn$ the segment $[x_0-\ep\gamma,x_0+\ep\gamma]$ is contained in $L$.
 Then for any vector $\eta\in\rn$ we have
 $$u_{\eta\gamma\gamma}(x_0)=0,\ \ u_{\eta\eta\gamma\gamma}(x_0)\geq 0.$$
 More generally, for $u \in C^2$ (and still convex) the function
 $$(-\ep,\ep)\ni t\longmapsto u_{\eta\eta}(x_0+t\gamma)$$
is convex.
\end{lemma}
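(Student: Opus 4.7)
The plan is to exploit convexity of $u$ together with the fact that $u$ restricted to the segment $[x_0-\varepsilon\gamma,x_0+\varepsilon\gamma]$ is affine, which implies that $u_{\gamma\gamma}$ vanishes identically along this segment.

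First I would dispose of the $C^4$ statements. Since $u$ is affine along the segment, the one-variable function $t\mapsto u(x_0+t\gamma)$ is affine on $(-\varepsilon,\varepsilon)$, so differentiating twice in $t$ gives $u_{\gamma\gamma}(x_0+t\gamma)=0$ for every such $t$. On the other hand, convexity of $u$ means $u_{\gamma\gamma}\geq 0$ on all of $U$. Hence the function $u_{\gamma\gamma}$ attains its global minimum at every interior point of the segment, in particular at $x_0$. Taking first and second derivatives at this minimum yields $\partial_\eta u_{\gamma\gamma}(x_0)=u_{\eta\gamma\gamma}(x_0)=0$ and $\partial^2_{\eta\eta}u_{\gamma\gamma}(x_0)=u_{\eta\eta\gamma\gamma}(x_0)\geq 0$ for every vector $\eta$, as required.

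The more delicate claim is the convexity of $t\mapsto u_{\eta\eta}(x_0+t\gamma)$ under only a $C^2$ hypothesis, since differentiating $u_{\gamma\gamma}$ twice is not available. Here I would use the second-order difference quotient
\[
\phi_h(t):=\frac{u(x_0+t\gamma+h\eta)-2u(x_0+t\gamma)+u(x_0+t\gamma-h\eta)}{h^2},
\]
defined for $t\in(-\varepsilon,\varepsilon)$ and small $h>0$. The two outer terms $t\mapsto u(x_0+t\gamma\pm h\eta)$ are restrictions of the convex function $u$ to affine lines in $\mathbb R^n$ and are therefore convex in $t$. The middle term $-2u(x_0+t\gamma)$ is affine in $t$ by the hypothesis that $u$ is affine on the segment, and in particular convex. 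Adding these gives convexity of $\phi_h$ in $t$ for every fixed $h$. Since $u\in C^2$, Taylor's theorem (or the standard fact that centred second differences of a $C^2$ function converge to the pure second derivative) gives $\phi_h(t)\to u_{\eta\eta}(x_0+t\gamma)$ pointwise as $h\to 0$, and a pointwise limit of convex functions is convex.

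I do not expect a real obstacle here: the only subtle point is resisting the temptation to differentiate $u_{\gamma\gamma}$ directly in the $C^2$ case and instead working with difference quotients so that only the affinity of $u$ along the segment and the convexity of $u$ on the ambient domain are used. One might also verify that $u_{\eta\eta\gamma\gamma}(x_0)\geq 0$ in the $C^4$ case is consistent with the convexity assertion, since it is exactly the second $t$-derivative at $0$ of the limiting convex function $t\mapsto u_{\eta\eta}(x_0+t\gamma)$.
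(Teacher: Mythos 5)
Your proposal is correct and follows essentially the same route as the paper: for the $C^4$ case both arguments observe that $u_{\gamma\gamma}\geq 0$ with $u_{\gamma\gamma}(x_0)=0$, so $x_0$ is an interior minimum and the first and second derivative tests in the direction $\eta$ give the two claims; for the $C^2$ case both write $u_{\eta\eta}(x_0+t\gamma)$ as a limit of centred second difference quotients, each of which is convex in $t$ as a sum of two convex functions minus an affine one. No gaps.
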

\begin{proof}
 As $[x_0-\ep\gamma,x_0+\ep\gamma]$ is compact, it admits a neighborhood compactly supported in $U$. thus, for sufficiently small $\delta>0$ the function
 $f:(-\delta,\delta)\ni t\longmapsto u_{\gamma\gamma}(x_0+t\eta)$ is well defined. Note that, by convexity, $f\geq 0$ and $f(0)=0$ as $u$ is affine along the segment. So $f$ has a local minimum at $0$. Hence $$u_{\eta\gamma\gamma}(x_0)=f'(0)=0,\ \  u_{\eta\eta\gamma\gamma}(x_0)=f''(0)\geq 0,$$
 as claimed.

 To get the claim for $u\in C^2$ note that
 $$u_{\eta\eta}(x_0+t\gamma)=lim_{s\rightarrow 0^+}\frac1{s^2}[u(x_0+t\gamma+s\eta)+u(x_0+t\gamma-s\eta)-2u(x_0+t\gamma)],$$
 (the right hand side function is well-defined for sufficiently small $s>0$). But for each fixed $s>0$ the function
 $$t\longmapsto [u(x_0+t\gamma+s\eta)+u(x_0+t\gamma-s\eta)-2u(x_0+t\gamma)]$$
 is convex as the the sum of two convex functions minus an affine one. Taking the limit concludes the proof.
\end{proof}
The second lemma will be very helpful in the computations later on:
\begin{lemma}\label{lem:gradient_derivative_vanishing}
 Let $u: U\longmapsto \mathbb R$ be a convex function of class $C^3$, which is affine an a set $L\subset U$. Suppose that for some $x_0\in U$, $\ep>0$ and some unit vector $\gamma\in\rn$ the segment $[x_0-\ep\gamma,x_0+\ep\gamma]$ is contained in $L$. Then
 $$[\|Du(x_0)\|^2]_{\gamma}=[\|Du(x_0)\|^2]_{\gamma\gamma}=0.$$
\end{lemma}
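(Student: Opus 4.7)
The plan is to combine two facts. First, affinity of $u$ along the segment $[x_0-\ep\gamma,x_0+\ep\gamma]$ forces $u_{\gamma\gamma}$ to vanish identically along that segment, because the restriction $t\mapsto u(x_0+t\gamma)$ is itself affine in $t\in(-\ep,\ep)$. Second, combining this with convexity forces the entire column $D^2u(x_0)\gamma$ of the Hessian to vanish. The key linear algebra input is that a symmetric positive semi-definite matrix $A$ satisfies $\gamma^{\top}A\gamma=0$ if and only if $A\gamma=0$; this follows, for instance, by writing $A=B^2$ for the PSD square root, so that $\|B\gamma\|^2=\gamma^{\top}A\gamma=0$ implies $B\gamma=0$ and hence $A\gamma=0$.

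Applying this at $x_0$: convexity of $u$ gives $D^2u(x_0)\geq 0$, and the first observation gives $\gamma^{\top}D^2u(x_0)\gamma=u_{\gamma\gamma}(x_0)=0$. Hence $D^2u(x_0)\gamma=0$, which written out in coordinates says $u_{k\gamma}(x_0)=0$ for every $k=1,\ldots,n$.

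With these vanishing mixed second derivatives in hand, I compute directly from the chain rule:
$$[\|Du\|^2]_\gamma = 2\sum_k u_k\,u_{k\gamma},\qquad [\|Du\|^2]_{\gamma\gamma} = 2\sum_k u_{k\gamma}^2 + 2\sum_k u_k\,u_{k\gamma\gamma}.$$
At $x_0$ the formula for $[\|Du\|^2]_\gamma$ vanishes immediately since every $u_{k\gamma}(x_0)=0$. In the expansion of $[\|Du\|^2]_{\gamma\gamma}$ the first sum vanishes for the same reason, while each term of the second sum vanishes by Lemma \ref{lem:convexity_in_affinity_directions} applied with $\eta=e_k$, the $k$-th standard basis vector, yielding $u_{e_k\gamma\gamma}(x_0)=0$. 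Summing completes the argument. The only substantive step is the PSD-to-kernel upgrade in the second paragraph; everything else is a short chain-rule calculation invoking Lemma \ref{lem:convexity_in_affinity_directions}.
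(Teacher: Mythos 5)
Your proposal is correct and follows essentially the same route as the paper: both arguments reduce to the observation that positive semi-definiteness of $D^2u(x_0)$ together with $u_{\gamma\gamma}(x_0)=0$ forces the whole column $D^2u(x_0)\gamma$ to vanish, followed by the chain-rule expansion of $[\|Du\|^2]_\gamma$ and $[\|Du\|^2]_{\gamma\gamma}$ and an appeal to Lemma \ref{lem:convexity_in_affinity_directions} for the third-order term. The only cosmetic difference is that the paper completes $\gamma$ to an orthonormal basis while you work in standard coordinates, and you spell out the PSD-to-kernel step via the square root, which the paper leaves implicit.
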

\begin{proof}
 Complete $\gamma=:\gamma_1$ to an orthonormal basis $(\gamma_1,\cdots,\gamma_n)$ of $\rn$. Then $||Du(x)||^2=\sum_{k=1}^n(u_{\gamma_k}(x))^2$. The matrix $(u_{\gamma_j\gamma_j}(x_0))$ is positive semi-definite as $u$ is convex. The vanishing of $u_{\gamma_1\gamma_1}(x_0)$ (which follows from the affinity of $u$ along the segment $[x_0-\ep\gamma,x_0+\ep\gamma]$) forces then the vanishing of each $u_{\gamma_1\gamma_j}(x_0)$.

 But then
 \begin{equation*}
  \|Du(x_0)\|^2_\gamma=2\sum_{k=1}^nu_{\gamma_k}(x_0)u_{\gamma_k\gamma}(x_0)=0.
 \end{equation*}
Analogously
\begin{equation*}
 \|Du(x_0)\|^2_{\gamma\gamma}=2\sum_{k=1}^nu_{\gamma_k\gamma}(x_0)u_{\gamma_k\gamma}(x_0)
 +2\sum_{k=1}^n2u_{\gamma_k}(x_0)u_{\gamma_k\gamma\gamma}(x_0)=0,
\end{equation*}
where we used Lemma \ref{lem:convexity_in_affinity_directions} to justify the vanishing of the second term.
\end{proof}
The next proposition is the technical heart of the argument:
\begin{prop}\label{prop:convexity_of_H_along_affine_sets_of_u}
Let $u: U\longmapsto \mathbb R$ be a convex function of class $C^4$, which is affine an a set $L\subset U$. Then the function $\tilde{H}^M_u (x)$
(defined in \eqref{mean_gauss}) is convex along $L$.
\end{prop}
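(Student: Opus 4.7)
The plan is to reduce the convexity statement to showing that for any $x_0 \in L$ and any unit vector $\gamma$ for which a short segment $[x_0 - \varepsilon\gamma, x_0 + \varepsilon\gamma]$ lies in $L$, the one-variable function $t \mapsto \tilde{H}^M_u(x_0 + t\gamma)$ is convex near $t=0$. First I would exploit the fact that $u_{\gamma\gamma}\equiv 0$ on the segment, combined with the PSD character of $D^2u$, to deduce that the full $\gamma$-row of the Hessian vanishes along the segment: $u_{\gamma j}(x_0 + t\gamma) = 0$ for all $j$. Differentiating along the segment, this forces every first derivative $u_j$ to be constant along the segment, so $Du \equiv p$ with $p := Du(x_0)$, and in particular $\|Du\|^2 \equiv |p|^2$ on the segment (which is consistent with Lemma~\ref{lem:gradient_derivative_vanishing}).

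Using the constancy of $p$ to simplify the cubic term, $u_i u_j u_{ij}(x_0 + t\gamma) = u_{pp}(x_0 + t\gamma)$, the formula \eqref{mean_gauss} becomes
\[
\tilde{H}^M_u(x_0 + t\gamma) = (1 - |p|^2)\,\Delta u(x_0 + t\gamma) + u_{pp}(x_0 + t\gamma).
\]
Choosing an orthonormal frame $\{e, e_2, \ldots, e_n\}$ with $e := p/|p|$ (the case $p = 0$ is immediate since then $\tilde{H}^M_u = \Delta u$), which is constant along the segment, and using $\Delta u = u_{ee} + \sum_{i\geq 2} u_{e_i e_i}$ together with $u_{pp} = |p|^2 u_{ee}$, the expression cleans up to
\[
\tilde{H}^M_u(x_0 + t\gamma) = u_{ee}(x_0 + t\gamma) + (1 - |p|^2)\sum_{i=2}^n u_{e_i e_i}(x_0 + t\gamma).
\]

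The last step is to invoke Lemma~\ref{lem:convexity_in_affinity_directions}, which says precisely that $t \mapsto u_{\eta\eta}(x_0 + t\gamma)$ is convex for each fixed $\eta$. Applied with $\eta = e$ and $\eta = e_i$, this makes every summand a convex function of $t$; under the hypothesis that $M_u$ admits no timelike points (the regime in which this proposition is invoked for Theorem~\ref{rigidity_Minkowski}), the coefficient $1 - |p|^2$ is non-negative, so the right-hand side is a non-negative combination of convex functions, and hence convex.

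The main obstacle I foresee is the sign of $1 - |p|^2$ in the timelike regime, where the second summand becomes concave and the simple decomposition above is not conclusive. A direct calculation of $[\tilde{H}^M_u]_{\gamma\gamma}(x_0)$ based on Lemmas~\ref{lem:convexity_in_affinity_directions} and~\ref{lem:gradient_derivative_vanishing} reduces everything to the pointwise inequality $(1 - |p|^2)\operatorname{tr}(B) + p^T B p \geq 0$, where $B := (u_{ij\gamma\gamma}(x_0))$ is positive semi-definite with vanishing $\gamma$-row and $\gamma$-column (since $u_{\gamma\gamma\gamma\gamma}(x_0) = 0$ forces the associated entry of the PSD matrix $B$ to be zero). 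Handling that inequality for $|p| > 1$ would require additional structural input beyond the purely pointwise constraints used above.
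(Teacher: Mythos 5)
Your argument is correct and follows essentially the same route as the paper: you use the vanishing of the $\gamma$-row of $D^2u$ along the segment (the content of Lemma~\ref{lem:gradient_derivative_vanishing}) to freeze $Du\equiv p$ there, and then express $\tilde{H}^M_u$ restricted to the segment as a combination of the functions $t\mapsto u_{\eta\eta}(x_0+t\gamma)$, each convex by Lemma~\ref{lem:convexity_in_affinity_directions}. The paper performs the same computation at the level of $(\tilde{H}^M_u)_{\gamma\gamma}(x_0)$, splitting it into four terms, killing the cross terms via Lemma~\ref{lem:gradient_derivative_vanishing}, and bounding $I=w^M[\Delta u]_{\gamma\gamma}\geq 0$ and $IV=u_ku_lu_{kl\gamma\gamma}\geq 0$ separately; your adapted-frame identity $B_{ee}+(1-|p|^2)\sum_{i\geq 2}B_{e_ie_i}$ is equivalent bookkeeping (and has the small bonus of working for $C^2$ convex $u$ via the last clause of Lemma~\ref{lem:convexity_in_affinity_directions}). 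The one substantive issue you flag is well taken: the non-negativity of $1-\|Du(x_0)\|^2$ is genuinely needed, and the paper's own bound $I\geq 0$ silently uses $w^M\geq 0$ even though the Proposition as stated does not assume the absence of timelike points --- that hypothesis only appears in Theorem~\ref{rigidity_Minkowski}, where the Proposition is invoked. So your proof is valid exactly in the regime where the result is used, which matches what the paper actually proves.
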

\begin{proof}
 Suppose that for some $x_0\in U$, $\ep>0$ and some unit vector $\gamma\in\rn$ the segment $[x_0-\ep\gamma,x_0+\ep\gamma]$ is contained in $L$. It suffices to show that $(\tilde{H}^M_u)_{\gamma\gamma}(x_0)\geq 0$.
By calculations, we have
\[
\begin{aligned}
 & (\tilde{H}^M_u)_{\gamma\gamma}(x_0)=w^M[\Delta u(x_0)]_{\gamma\gamma} + 2 (1-\|Du(x_0)\|^2)_\gamma [\Delta u(x_0)]_{\gamma}\\
 &+ \Delta u(x_0)(1-\|Du(x_0)\|^2)_{\gamma\gamma}
 + [u_k(x_0)u_l(x_0)u_{kl}(x_0)]_{\gamma\gamma}\\
 & = I+II+III+IV.
\end{aligned}
\]
A direct application of Lemma \ref{lem:gradient_derivative_vanishing} implies that all terms, except for $I$ and $IV$, vanish.
Yet another application of the same lemma results in
$$IV=u_k(x_0)u_l(x_0)u_{kl\gamma\gamma}(x_0).$$
Application of Lemma \ref{lem:convexity_in_affinity_directions} for $\eta=e_k,\ k=1,\cdots,n$ (each of the coordinate directions) results in the bound $I\geq 0$. The same lemma applied to $\eta=(u_1(x_0),\cdots,u_n(x_0))$ implies $IV\geq 0$.
Hence $(\tilde{H}^M_u)_{\gamma\gamma}(x_0)=I+IV\geq 0$, as claimed.
\end{proof}

\begin{proof}[Proof of  Theorem \ref{rigidity_Minkowski}]
Fix $\ep>0$ and a point $x_0\in\rn$. The assumptions on the decay of $\tilde{H}^M_u$ imply the existence of a large radius $R>1$ so that
$$\forall x\in \pa B_R(x_0)\ \ 0\leq \tilde{H}^M_u (x)\leq \ep.$$
From Lemma \ref{lem:Caffarelli_Nirenberg_Spruck} there are at most $n+1$ points $x_1,\cdots, x_m\in\pa B_R(x_0)$ so that $x_0$ belongs to the closed simplex $L$ spanned by $x_i$ and $u$ is affine along $L$. Then Proposition \ref{prop:convexity_of_H_along_affine_sets_of_u} forces the convexity of $\tilde{H}^M_u$ along $L$. In particular,
$$\tilde{H}^M_u(x_0)\leq max\lbr \tilde{H}^M_u(x_1),\cdots,\tilde{H}^M_u(x_m)\rbr\leq\ep.$$
As $\ep$ is arbitrary $\tilde{H}^M_u (x_0)=0$. Convexity of $u$ forces each principal curvature at $x_0$ to vanish. Thus the graph of $u$ is affine, as claimed.
\end{proof}

\begin{proof}[Proof of Theorem \ref{thm_analysis}]
 By the last remark in Lemma \ref{lem:convexity_in_affinity_directions} $\Delta u$ is convex along the affinity set of $u$. Hence, just as in the previous proof, the decay of $\Delta u$ at infinity leads to global vanishing. Finally, harmonic convex functions have to be affine. This finishes the proof.
\end{proof}

\begin{proof}[Proof of Theorem \ref{rigidity_Euclidean1}]
	
	Once again, the core of the proof is to show the convexity of $H^E_u$ in the affinity directions of $u$. Just as in the proof of Proposition \ref{prop:convexity_of_H_along_affine_sets_of_u} fix a point $x_0\in\mathbb R^n$ and a direction $\gamma\in\mathbb R^n$, so that $u$ is affine on $[x_0-\varepsilon\gamma, x_0-\varepsilon\gamma]$ for some $\varepsilon>0$.

Analogous computation shows that
$$(H^E_u(x_0))_{\gamma\gamma}=(w^E(x_0))^{-3/2}[w^E(x_0)\Delta u(x_0)_{\gamma\gamma}-u_i(x_0)u_j(x_0)u_{ij\gamma\gamma}(x_0)].$$

It suffices to establish the non-negativity of the term in the square brackets. To this end for each tuple $(i,j), i<j$ we define the vector $\eta_{ij}\in\mathbb R^n$ with
coefficients $u_j(x_0)$ on the $i$-th slot, $-u_i(x_0)$ on the $j$-th slot and zero otherwise. Then,
$$w^E(x_0)\Delta u(x_0)_{\gamma\gamma}-u_i(x_0)u_j(x_0)u_{ij\gamma\gamma}(x_0)=\Delta u(x_0)_{\gamma\gamma}+\sum_{i<j}u_{\eta_{ij}\eta_{ij}\gamma\gamma}(x_0),$$
which is non-negative by Lemma \ref{lem:convexity_in_affinity_directions}. Hence $(H^E_u(x_0))_{\gamma\gamma}\geq0$, as claimed.
\end{proof}

\begin{remark}
The smoothness  assumption is essential in all theorems above as the following example shows: let $v(x)=a||x||$ for any $a\in(0,1)$. Then $det(D^2v)=0$ on $\mathbb R^n\setminus\lbrace0\rbrace$, while $\Delta v(x)=a\frac{n-1}{||x||}$, $H^E_v(x)=\frac{(n-1)a}{\sqrt{(1+a^2)}||x||}$, $\tilde{H}^M_v(x)=(1-a^2)\frac{(n-1)a}{||x||}$. For any $1\leq k<n$ the function
$v_k(x):=a\sqrt{\sum_{j=1}^kx_j^2},\ a\in(0,1)$ satisfies even globally  $det(D^2v_k)=0$ in a weak sense and all $\Delta v_k, H^E_{v_k}, \tilde{H}^M_{v_k}$ decay to zero in the smooth directions.
\end{remark}

Faculty of Mathematics and Computer Science,  Jagiellonian University 30-348 Krakow, Lojasiewicza 6, Poland;\\ e-mail: {\tt slawomir.dinew@im.uj.edu.pl}

School of Mathematics, Harbin Institute of Technology, Harbin, Heilongjiang 150001, China;\\ e-mail: {\tt 22B912007@stu.hit.edu.cn}

School of Mathematics and Institute for Advanced Study in Mathematics, Harbin Institute of Technology, Harbin, Heilongjiang 150001, China;\\ e-mail: {\tt jiao@hit.edu.cn}

\end{document}